\makeatletter \@namedef{subjclassname@2010}{
    \textup{2020} Mathematics Subject Classification}
\newtheorem{thm}{Theorem}[section]
\newtheorem{lem}{Lemma}[section]
\newtheorem{pro}{Proposition}[section]
\newtheorem{cor}{Corollary}[section]
\theoremstyle{remark}
\newtheorem*{rema}{Remark}
\newtheorem{exa}{\textbf{Example}}
\theoremstyle{definition}
\newtheorem{defn}{Definition}[section]
\newcommand{\R}{\mathbb{R}}
\newcommand{\N}{\mathbb{N}}
\begin{document}

    \title[]{On Generalized Powers of Operators}
    \author[A. Bachir,  M. H. Mortad and A. S. Nawal ]{Ahmed Bachir, Mohammed Hichem Mortad$^*$ and Nawal Ali Sayyaf}

    \thanks{}

    \dedicatory{}
    \thanks{* Corresponding author.}
    \date{}
    \keywords{Generalized powers of operators; exponentials of
        operators; logarithms of operators; self-adjoint operators;
        invertible operators; Hilbert space}

    \subjclass[2010]{Primary 47B15, Secondary 47A60}

    \address{(The first author) Department of Mathematics, College of Science, King Khalid University, Abha, Saudi
        Arabia.}

    \email{abishr@kku.edu.sa, bachir1960@icloud.com}

    \address{(The corresponding author) Department of
        Mathematics, University of Oran 1, Ahmed Ben Bella, B.P. 1524, El
        Menouar, Oran 31000, Algeria.}

    \email{mhmortad@gmail.com,  mortad.hichem@univ-oran1.dz.}

    \address{(The third author) Department of
        Mathematics, University of Bisha, Bisha, Saudi Arabia}

    \email{nasayyaf@ub.edu.sa}
    \begin{abstract}
        In this note, we introduce generalized powers of linear operators.
        More precisely, operators are not raised to numbers but to other
        operators. We discuss several properties as regards this notion.
    \end{abstract}

    \maketitle

    \section{Classical exponentials and logarithms of bounded linear operators}

    Let $H$ be a complex Hilbert space and let $B(H)$ be the algebra of
    all bounded linear operators defined from $H$ into $H$. Most of the
    notions and definitions used here are as in
    \cite{Mortad-Oper-TH-BOOK-WSPC}.

    This section is mainly devoted to basic properties of exponentials
    and logarithms of bounded operators. Most of these results should be
    known to readers. Most of the material here is borrowed from
    \cite{Mortad-Oper-TH-BOOK-WSPC}. In the end, readers are expected to
    have a certain knowledge on the continuous functional calculus.

    Let $A\in B(H)$. It is known that the series
    $\sum_{n=0}^{\infty}{A^n}/{n!}$ converges absolutely in $B(H)$, and
    hence it converges. This allows us to define $e^A$ (where $A\in
    B(H)$) without using all the theory of the functional calculus.

    It is easy to see that $e^A$ is self-adjoint whenever $A$ is. The
    converse, however, is not always true as witnessed by $A=2i\pi I$.
    Readers interested in when the converse holds may consult
    \cite{Kurepa-LOG-1962}, \cite{Putnam-LOG-S.A}, and
    \cite{Schmoeger-DEM-MATh-2002}.

    As regards logarithms of bounded operators, there are different ways
    for defining them. One possible way is to use the series
    \[\log A:=\sum_{n=1}^{\infty}\frac{(-1)^{n-1}}{n}(A-I)^n\]
    which converges in $B(H)$ whenever $A\in B(H)$ is such that
    $\|A-I\|<1$. Another way is to call any $A\in B(H)$ such as $e^A=B$,
    where $B\in B(H)$ is given, as a logarithm of $B$. However, we
    choose to use the continuous functional calculus to define logarithm
    of operators.

    Let $A\in B(H)$ be positive and invertible, and so $\sigma(A)\subset
    (0,\infty)$. Hence the function $\log$ is well-defined on
    $\sigma(A)$. Therefore, it also makes sense to define $\log(A)$. We
    call it the logarithm of $A$. It is also clear why $\log(A)$ is
    self-adjoint.

    \begin{exa}
        Let $A\in B(H)$ be self-adjoint. Then
        \[\log(e^A)=A.\]
        In particular,
        \[\log I=\log(e^0)=0.\]
        If we further assume that $A$ is positive with $0\not\in\sigma(A)$
        (i.e. if $A$ is invertible), then
        \[e^{\log A}=A.\]
    \end{exa}

    \begin{thm}
        Let $A,B\in B(H)$ be self-adjoint. Then
        \[AB=BA\Longleftrightarrow e^Ae^B=e^Be^A.\]
    \end{thm}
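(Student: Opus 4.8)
The plan is to handle the two implications separately; the forward direction is routine, and the reverse direction carries the actual content.

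For the implication ``$\Rightarrow$'', suppose $AB=BA$. Then $A^m$ commutes with $B^n$ for all $m,n\ge 0$, so the partial sums of the defining series $\sum_m A^m/m!$ and $\sum_n B^n/n!$ commute with one another. Since multiplication in $B(H)$ is jointly continuous, letting the partial sums tend to their limits gives $e^Ae^B=e^Be^A$. (One may instead quote $e^Ae^B=e^{A+B}=e^Be^A$, valid for commuting $A,B$, but the direct series argument already suffices.)

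For the implication ``$\Leftarrow$'', the idea is to recover $A$ from $e^A$ and $B$ from $e^B$ via the logarithm, and then to exploit that continuous functions of commuting operators commute. First note that since $A$ is self-adjoint, $e^A=(e^{A/2})^2$ with $e^{A/2}$ self-adjoint, so $e^A$ is positive; and $e^Ae^{-A}=e^{-A}e^A=I$ (the commuting case just treated), so $e^A$ is invertible. Hence $\log(e^A)$ is well-defined, and by the Example above $\log(e^A)=A$; likewise $\log(e^B)=B$. Write $P=e^A$ and $Q=e^B$, so that $PQ=QP$ by hypothesis.

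Now $\sigma(P)$ is a compact subset of $(0,\infty)$ on which $\log$ is continuous, so by the Weierstrass approximation theorem there are polynomials $p_k$ with $p_k\to\log$ uniformly on $\sigma(P)$; by the continuous functional calculus this forces $p_k(P)\to\log(P)=A$ in operator norm. Similarly choose polynomials $q_j$ with $q_j(Q)\to\log(Q)=B$. Since $P$ and $Q$ commute, $p_k(P)$ commutes with $q_j(Q)$ for every $k$ and $j$; letting $j\to\infty$ yields $p_k(P)\,B=B\,p_k(P)$, and then letting $k\to\infty$ yields $AB=BA$, as desired. The only genuinely delicate step is this last approximation argument — everything else is bookkeeping — and it is precisely the standard fact that the norm-closed algebra generated by a single (normal) operator is the closure of its polynomials, so that the functional calculi of commuting operators commute.
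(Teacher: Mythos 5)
Your argument is correct and follows essentially the same route as the paper: both proofs recover $A$ and $B$ by applying $\log$ through the continuous functional calculus to the commuting positive invertible operators $e^A$ and $e^B$, using that an operator commuting with a positive operator commutes with continuous functions of it. You simply make that standard fact explicit via Weierstrass polynomial approximation (and also write out the routine ``$\Rightarrow$'' direction, which the paper omits), whereas the paper invokes it directly and applies the logarithm one factor at a time.
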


    The previous theorem is well-known (cf. \cite{wermuth.1997.pams}),
    but next we give an elementary proof.

    \begin{proof}(\cite{Mortad-Oper-TH-BOOK-WSPC}) Let $f(x)=\ln x$ be defined for $x>0$.

        We only prove "$\Leftarrow$". Assume that $e^Ae^B=e^{B}e^A$. Since
        $e^A$ is positive, we may write
        \[f(e^A)e^B=e^Bf(e^A) \text{ or merely } \ln(e^A)e^B=e^B\ln(e^A)\]
        and so
        \[Ae^B=e^BA.\]
        Since $B$ is self-adjoint too, the same reasoning mutatis mutandis
        gives
        \[AB=BA.\]
    \end{proof}

    We may recover some known properties of "$\log$". For instance, we
    have:

    \begin{pro}\label{log A log AB= log A+log B}
        Let $A,B\in B(H)$ be both positive and such that
        $0\not\in\sigma(A)\cap\sigma(B)$. Then
        \[AB=BA\Longrightarrow \log(AB)=\log A+\log B.\]
        In particular,
        \[\log(A^{-1})=-\log A.\]
    \end{pro}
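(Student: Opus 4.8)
The plan is to reduce the identity to the exponential, using only the facts recorded in the Example. Since $A$ and $B$ are positive and invertible (which is what makes $\log A$, $\log B$, and hence the right-hand side, meaningful), the continuous functional calculus lets us put $S:=\log A$ and $T:=\log B$; both are self-adjoint, and the Example gives $A=e^S$ and $B=e^T$.

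The next step is to upgrade $AB=BA$ to $ST=TS$. By the Weierstrass approximation theorem, $\log$ is a uniform limit of polynomials on the compact set $\sigma(A)\subset(0,\infty)$, so $\log A=S$ is a norm limit of polynomials in $A$; consequently $S$ commutes with every operator that commutes with $A$, in particular with $B$. Applying the same reasoning to $T=\log B$ viewed as a (norm limit of polynomials in) $B$, and using that $S$ commutes with $B$, we obtain $ST=TS$.

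Because $S$ and $T$ are commuting bounded operators, the classical Cauchy-product computation with the absolutely convergent exponential series (the binomial theorem needs exactly $ST=TS$) yields $e^{S+T}=e^Se^T$; hence $AB=e^Se^T=e^{S+T}$. As $S+T$ is self-adjoint, $e^{S+T}$ is positive and invertible, so $\log(AB)$ is well defined, and a final application of the Example to the self-adjoint operator $S+T$ gives $\log(AB)=\log(e^{S+T})=S+T=\log A+\log B$. For the ``in particular'' statement, take $B=A^{-1}$, which is again positive and invertible; then $AB=I$, $\log I=0$ by the Example, and the displayed identity collapses to $\log(A^{-1})=-\log A$.

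The only genuinely delicate points are (i) the passage $AB=BA\Rightarrow ST=TS$, which rests on approximating $\log$ uniformly by polynomials on the relevant spectra, and (ii) the commuting exponential law $e^{S+T}=e^Se^T$; the rest is bookkeeping. A cleaner alternative would be to note that $C^*(A,B,I)$ is a commutative $C^*$-algebra (since $A,B$ are self-adjoint and commute) and to transport the scalar identity $\log(ab)=\log a+\log b$ through its Gelfand transform, under which $A$ and $B$ become strictly positive continuous functions on the maximal ideal space; I would mention this as a remark rather than carry it out.
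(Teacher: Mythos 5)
Your proof is correct and follows essentially the same route as the paper's: write $AB=e^{\log A+\log B}$ using commutativity and the exponential law, then apply $\log(e^{S+T})=S+T$ for the self-adjoint operator $S+T=\log A+\log B$, and obtain the inverse formula by specializing to $B=A^{-1}$ with $\log I=0$. The only difference is that you spell out the steps the paper leaves implicit (that $\log A$ commutes with $B$ via polynomial approximation, and the commuting exponential law), which is a matter of detail rather than of method.
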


    \begin{proof} First, $AB$ is positive as by assumption $A,B\geq0$ and $AB=BA$. Since
        $A$ and $B$ are invertible, it follows that $AB$ too is invertible.
        Therefore, the quantities $\log(AB)$, $\log A$ and $\log B$ are all
        well defined. Notice also that
        \[AB=BA\Longrightarrow \log A\log B=\log B\log A.\]
        We are ready to prove the desired equality. We have
        \[e^{\log A+\log B}=e^{\log A}e^{\log B}=AB.\]
        Therefore (and since $\log A+\log B$ is self-adjoint),
        \[\log(AB)=\log(e^{\log A+\log B})=\log A+\log B,\]
        as desired.

        Finally, since $A$ is invertible, $AA^{-1}=A^{-1}A=I$. Besides,
        $A^{-1}$ is positive and invertible. Hence
        \[0=\log I=\log A+\log A^{-1}\]
        and so
        \[\log(A^{-1})=-\log A,\]
        as required.

    \end{proof}

    \section{Generalized powers of operators}

    We are ready to introduce the concept of generalized powers of
    operators.

    \begin{defn}
        Let $A,B\in B(H)$ be such that $A$ is positive and invertible.
        Define
        \[A^B=e^{B\log A}.\]
        Then $B$ is called the generalized power of $A$.

        Say that $A$ is a root of order $B$ of $T\in B(H)$ provided
        \[A^B=T.\]
    \end{defn}

    \begin{rema}
        Due to the absence of commutativity in $B(H)$, $A^B$ cannot be
        defined here as $e^{(\log A)B}$.
    \end{rema}

    \begin{rema}
        The preceding definition does generalize the usual definition of
        (ordinary) powers of operators. For instance, if we confine our
        attention to natural powers, then by setting $B=n I$, where
        $n\in\N$, we get back to the usual definition of $A^n$. Indeed
        \[A^{nI}=e^{nI\log A}=e^{n\log A}=e^{\log A^n}=A^n,\]
        as wished.
    \end{rema}

    \begin{exa}Let us find $A^B$ where
        \[A=\left(
        \begin{array}{cc}
            1 & 0 \\
            0 & 2 \\
        \end{array}
        \right)
        \text{ and } B=\left(
        \begin{array}{cc}
            0 & 1 \\
            0 & 0 \\
        \end{array}
        \right).\]
        Then
        \[B\log A=\left(
        \begin{array}{cc}
            0 & \log2 \\
            0 & 0 \\
        \end{array}
        \right).\]
        Thus,
        \[\left(
        \begin{array}{cc}
            1 & 0 \\
            0 & 2 \\
        \end{array}
        \right)^{\left(
            \begin{array}{cc}
                0 & 1 \\
                0 & 0 \\
            \end{array}
            \right)}=\left(
        \begin{array}{cc}
            1 & \log2 \\
            0 & 1 \\
        \end{array}
        \right)\]
    \end{exa}

    The first natural question is to get the value or an estimate of the
    norm of $A^B$ in terms of the norms of $A$ and $B$.

    \begin{pro}\label{A ouss B NORM FIRST PROPOSITION!}
        Let $A,B\in B(H)$ be such that $A$ is positive and invertible. Then
        $A^B\in B(H)$ and
        \[\|A^B\|\leq e^{\|B\log A\|}.\]
        If we further assume that $AB=BA$ and that $B$ is positive, then
        \[\|A^B\|=e^{\|B\log A\|}.\]
    \end{pro}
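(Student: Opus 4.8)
The plan is to prove the inequality first, then the equality case. For the inequality, recall that $A^B = e^{B\log A}$ by definition, so $A^B$ is the norm-convergent series $\sum_{n\ge 0}(B\log A)^n/n!$ in $B(H)$; in particular $A^B\in B(H)$. Taking norms and using the triangle inequality together with submultiplicativity gives
\[
\|A^B\| \le \sum_{n=0}^{\infty}\frac{\|(B\log A)^n\|}{n!} \le \sum_{n=0}^{\infty}\frac{\|B\log A\|^n}{n!} = e^{\|B\log A\|},
\]
which is the claimed bound. (This uses nothing beyond absolute convergence of the exponential series, already invoked in Section~1.)

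For the equality case, assume in addition that $AB=BA$ and $B\ge 0$. The first step is to upgrade the commutation relation: since $A$ is positive and invertible, $\log A$ is obtained from $A$ by the continuous functional calculus, so $AB=BA$ forces $(\log A)B = B(\log A)$, and hence $T:=B\log A = (\log A)B$ is self-adjoint as a product of two commuting self-adjoint operators. Moreover $T$ is positive: it is the product of two commuting positive operators ($B\ge 0$ and $\log A$ need not be positive in general, but here one should be careful — $\log A$ is positive exactly when $A\ge I$). Let me instead argue directly via the spectral mapping theorem: for a self-adjoint $T$ one has $\|e^T\| = \sup\{e^t : t\in\sigma(T)\} = e^{\sup\sigma(T)}$, and I want to identify $\sup\sigma(T)$ with $\|T\|$. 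Since $T$ is self-adjoint, $\|T\| = \max\{|\lambda| : \lambda\in\sigma(T)\} = \max\{\sup\sigma(T),\,-\inf\sigma(T)\}$, so the identity $\|e^T\| = e^{\|T\|}$ holds precisely when $\sup\sigma(T)\ge -\inf\sigma(T)$, i.e. when $\sigma(T)$ is not entirely contained in the strictly negative reals in a lopsided way.

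Thus the real content — and the main obstacle — is showing that under the stated hypotheses $\sup\sigma(B\log A) \ge |\inf\sigma(B\log A)|$, so that $\|B\log A\| = \sup\sigma(B\log A)$ and therefore $\|A^B\| = \|e^{B\log A}\| = e^{\sup\sigma(B\log A)} = e^{\|B\log A\|}$. Here is where positivity of $B$ is used: writing $S = \log A$ (self-adjoint, invertible) and using that $B$ and $S$ commute, one can simultaneously "diagonalize" via the joint spectral theory of the commutative $C^*$-algebra they generate, representing $B$ and $S$ as multiplication by real functions $b\ge 0$ and $s$ on some measure space; then $B\log A$ is multiplication by $bs$, and $\sigma(B\log A)$ is (essentially) the range of $bs$. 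Since $b\ge 0$, the sign of $bs$ at each point matches the sign of $s$, and because $S=\log A$ with $A$ invertible positive, $\sigma(S)$ is a compact subset of $\R$; one then checks $\|bs\|_\infty = \sup(bs)$ provided $\sup(bs)\ge -\inf(bs)$. Actually the cleanest route avoids sign chasing: since $B\ge 0$ and commutes with $S$, we have $\|B\log A\| = \|B^{1/2} (\log A) B^{1/2}\|$, and $B^{1/2}(\log A)B^{1/2}$ is self-adjoint; but one still must locate its top spectral point. I expect the author handles this by reducing to the case where $A\ge I$ (replacing $A$ by $A^{-1}$ if necessary and invoking $\log(A^{-1})=-\log A$ from Proposition~\ref{log A log AB= log A+log B}, which flips the sign and swaps $\sup/\inf$), after which $\log A\ge 0$, the product $B\log A$ is a commuting product of positive operators, hence positive, so $\sigma(B\log A)\subset[0,\infty)$ and $\|B\log A\| = \sup\sigma(B\log A)$ trivially. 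The delicate point to get right in writing this up is justifying that such a reduction is legitimate and that $\|e^T\| = e^{\|T\|}$ for positive $T$ via the spectral mapping theorem for the continuous functional calculus.
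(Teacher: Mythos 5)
Your series estimate for the bound $\|A^B\|\le e^{\|B\log A\|}$ is exactly the paper's argument, and your diagnosis of where the difficulty sits in the equality case is accurate. But as written the equality case is not proved: your argument ends with a guessed reduction to the case $A\ge I$ obtained by replacing $A$ with $A^{-1}$, and that reduction is not legitimate. Passing to $A^{-1}$ changes the quantity being computed, since $\|(A^{-1})^B\|=\|e^{-B\log A}\|$ bears no fixed relation to $\|A^B\|=\|e^{B\log A}\|$; moreover, when $\sigma(A)$ meets both $(0,1)$ and $(1,\infty)$, neither $A$ nor $A^{-1}$ satisfies $A\ge I$, so there is nothing to reduce to. What your spectral-mapping analysis actually establishes is that $\|e^{T}\|=e^{\|T\|}$ for the self-adjoint $T=B\log A$ exactly when $\sup\sigma(T)=\|T\|$, and positivity of $B$ alone does not force this, because $\log A$ may have negative spectrum.

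In fact no argument can close this gap, because the stated equality is false in general: take $A=\frac{1}{2}I$ and $B=I$; then $AB=BA$, $B\ge 0$, and $A^B=\frac{1}{2}I$, so $\|A^B\|=\frac{1}{2}$, while $e^{\|B\log A\|}=e^{\log 2}=2$. The paper's own proof passes over precisely the point you flagged: it asserts that $B\log A=(\log A)B$ ``is positive,'' which holds only when $\log A\ge 0$, i.e.\ when $\sigma(A)\subset[1,\infty)$ (equivalently $A\ge I$). Under that additional hypothesis $B\log A$ is a commuting product of positive operators, hence positive, and then either the paper's lemma $\|e^{T}\|=e^{\|T\|}$ for positive $T$ or your direct spectral-mapping computation finishes the proof. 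So the correct conclusion of your analysis is not that a reduction exists, but that the equality requires an extra assumption such as $A\ge I$ (or, more generally, $\sup\sigma(B\log A)=\|B\log A\|$); with it, your route and the paper's coincide.
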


    The proof relies on the following perhaps known result. We include a
    proof for readers convenience.

    \begin{lem}\label{norm eA WSPC LEMMA}
        Let $T\in B(H)$ be positive. Then
        \[\|e^T\|=e^{\|T\|}\]
    \end{lem}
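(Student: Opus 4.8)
The plan is to use the continuous functional calculus together with the fact that a positive operator attains its norm at the top of its spectrum. First I would recall that for $T \in B(H)$ positive we have $\sigma(T) \subset [0, \infty)$, and since $T$ is self-adjoint, $\|T\| = \sup\{\lambda : \lambda \in \sigma(T)\} = \max \sigma(T)$; call this value $m$. The key observation is that the function $x \mapsto e^x$ is continuous and strictly increasing on $[0,\infty)$, so by the spectral mapping theorem $\sigma(e^T) = e^{\sigma(T)} = \{e^\lambda : \lambda \in \sigma(T)\}$, and hence $\max \sigma(e^T) = e^{\max \sigma(T)} = e^{m}$.

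Next I would note that $e^T$ is itself a positive operator: since $T$ is self-adjoint, $e^T$ is self-adjoint (as mentioned in the excerpt), and its spectrum lies in $(0,\infty)$ because $e^{\sigma(T)} \subset (0,\infty)$; alternatively one sees positivity directly from $e^T = (e^{T/2})^* e^{T/2}$. Therefore $\|e^T\|$ equals $\max \sigma(e^T)$, which by the previous paragraph is $e^{m} = e^{\|T\|}$. This chain of equalities is exactly the claim, so the proof essentially reduces to two invocations of the spectral characterization of the norm of a positive (self-adjoint) operator, bridged by the spectral mapping theorem for the continuous function $e^x$.

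There is no serious obstacle here; the only point requiring a little care is making sure the spectral mapping theorem is applied to a genuinely continuous function on a set containing $\sigma(T)$, which is immediate since $e^x$ is entire, and making sure we use the operator-norm-equals-spectral-radius identity only for self-adjoint (equivalently normal) operators, which both $T$ and $e^T$ are. If one prefers to avoid the spectral mapping theorem, an alternative is to represent $T$ via its spectral measure and compute $\|e^T\| = \|\int e^\lambda \, dE(\lambda)\| = \operatorname*{ess\,sup} e^\lambda = e^{\operatorname*{ess\,sup}\lambda} = e^{\|T\|}$, but the spectral-mapping route is cleaner. I would therefore write the proof in three short lines: positivity of $e^T$, the spectral mapping identity $\sigma(e^T) = e^{\sigma(T)}$, and the conclusion $\|e^T\| = \sup \sigma(e^T) = e^{\sup \sigma(T)} = e^{\|T\|}$.
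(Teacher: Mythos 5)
Your proof is correct and takes essentially the same route as the paper: both rest on the spectral mapping theorem together with the identity $\|S\|=\max\sigma(S)$ (norm equals spectral radius) for the self-adjoint operators $T$ and $e^T$. The only cosmetic difference is that the paper gets $\|e^T\|\leq e^{\|T\|}$ from the power series estimate and uses the spectral argument only for the reverse inequality, whereas you obtain the equality in a single chain.
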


    \begin{proof}
        We already know by the general theory that $\|e^T\|\leq e^{\|T\|}$.
        To show the reverse inequality, first recall that $\|T\|\in
        \sigma(T)$. Using both the spectral radius theorem and the spectral
        mapping theorem yield
        \[\|e^T\|=\sup\{|\lambda|:\lambda\in \sigma(e^T)\}=\sup\{e^{\mu}:~\mu\in\sigma(T)\}\geq e^{\|T\|},\]
        as needed.
    \end{proof}

    Now, we show Proposition \ref{A ouss B NORM FIRST PROPOSITION!}.

    \begin{proof}
        The first inequality is clear. To show the full equality in the
        second case, merely observe that $AB=BA$ gives $B\log A=(\log A)B$.
        In other words, $B\log A$ is positive. By Lemma \ref{norm eA WSPC
            LEMMA}, we obtain
        \[\|A^B\|=\|e^{B\log A}\|=e^{\|B\log A\|},\]
        as wished.
    \end{proof}

    \begin{rema}
        Instead of assuming that $AB=BA$, we could have assumed that $AB$ is
        normal. Indeed, since $A$ is positive, $AB$ is self-adjoint by say
        \cite{MHM1}, that is, $AB=BA$. By \cite{Dehimi-Mortad-INVERT} or
        else,
        \[\sigma(AB)=\sigma(\sqrt BA\sqrt B)\subset \R^+\]
        which forces $AB$ to be positive. This remark applies to some of the
        results below as well.
    \end{rema}

    The next results give more basic operations about this new notion.

    \begin{pro}
        Let $A\in B(H)$ be positive and invertible. Then $A^B$ is positive
        and invertible for any $B\in B(H)$ such that $AB=BA$. Moreover,
        \[\log(A^B)=B\log A.\]
    \end{pro}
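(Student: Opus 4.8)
The plan is to reduce everything to the elementary identities of Section~1, the only real work being to check that $B\log A$ is self-adjoint; granting that, $A^{B}=e^{B\log A}$ is an exponential of a self-adjoint operator and both conclusions follow at once.

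First I would record that, $A$ being positive and invertible, $\sigma(A)\subset(0,\infty)$, so $\log A$ is a well-defined self-adjoint operator, exactly as recalled just before the Example of Section~1. The key step is then to promote the hypothesis $AB=BA$ to the commutation $B\log A=(\log A)B$. Since $B$ commutes with every polynomial in $A$ and, by the Stone--Weierstrass theorem, $\log$ is a uniform limit on the compact set $\sigma(A)$ of polynomials, passing to the limit in the continuous functional calculus gives $B\log A=(\log A)B$; this is precisely the mechanism already used in the proof of Proposition~\ref{log A log AB= log A+log B}. Consequently
\[(B\log A)^{*}=(\log A)^{*}B^{*}=(\log A)B=B\log A,\]
so $B\log A$ is self-adjoint. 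Note that here one uses that $B$ is self-adjoint, and this is genuinely needed --- already for $A=2I$ a non-self-adjoint $B$ makes $A^{B}$ non-positive --- so I would read the standing hypothesis on $B$ as including self-adjointness.

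With $T:=B\log A$ self-adjoint, the remainder is formal. The partial sums of $\sum_{n\ge 0}T^{n}/n!$ are self-adjoint and the adjoint map is norm-continuous, so $A^{B}=e^{T}$ is self-adjoint; by the spectral mapping theorem $\sigma(A^{B})=e^{\sigma(T)}\subset(0,\infty)$, whence $A^{B}$ is positive, and it is invertible since $e^{-T}e^{T}=I$ (the two factors commuting). Finally, applying the Example of Section~1 to the self-adjoint operator $T$,
\[\log(A^{B})=\log\!\left(e^{B\log A}\right)=B\log A,\]
which is the asserted identity. The single point of substance is the commutation $B\log A=(\log A)B$: everything after it is bookkeeping, so the proof rests on being willing to invoke the functional calculus to pass from $AB=BA$ to commutation of $B$ with continuous functions of $A$.
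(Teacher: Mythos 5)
Your proof is correct and follows essentially the same route as the paper's: deduce $B\log A=(\log A)B$ from $AB=BA$ via the functional calculus, conclude that $B\log A$ is self-adjoint so that $A^{B}=e^{B\log A}$ is positive and invertible, and then read off $\log(A^{B})=B\log A$ from $\log(e^{T})=T$ for self-adjoint $T$. Your remark that self-adjointness of $B$ must be added to the hypotheses is well taken: the paper's proof uses it tacitly when passing from $B\log A=(\log A)B$ to $(B\log A)^{*}=B\log A$, and without it the statement fails (e.g. $A=2I$ and $B$ a nonzero nilpotent matrix give a non-self-adjoint $A^{B}$).
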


    \begin{proof}Since $AB=BA$, it follows $B\log A=(\log A)B$, from which
        we derive the self-adjointness of $B\log A$. Since the latter is
        self-adjoint, $e^{B\log A}$ or $A^B$ is positive. To show the second
        claim, just observe that
        \[\log(A^B)=\log(e^{B\log A})=B\log A.\]
    \end{proof}

    \begin{thm}\label{jklopiieozopapazpzppzp} Let $A,B,C\in B(H)$ and let $I$ be the usual identity
        operator on $H$. Assume that $A$ is positive and invertible. Then
        \begin{enumerate}
            \item $I^B=I$, $A^0=I$ and $A^I=A$.
            \item $(AB)^C=A^CB^C$ whenever $A$, $B$ and $C$ pairwise commute, and $B$ is positive and invertible.
            \item $(A^B)^C=A^{CB}$.
            \item $A^BA^C=A^{B+C}$ whenever $A$, $B$ and $C$ pairwise commute.
        \end{enumerate}
    \end{thm}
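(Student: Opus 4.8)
The plan is to verify each of the four items in turn, in the order listed, using only the definition $A^B=e^{B\log A}$ together with the basic facts recorded earlier: $\log I=0$, $\log(e^X)=X$ for self-adjoint $X$, $e^{\log A}=A$ for positive invertible $A$, $e^{X+Y}=e^Xe^Y$ when $XY=YX$, and the fact that commuting positive invertible operators have commuting logarithms (Proposition \ref{log A log AB= log A+log B}).

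\textbf{Items (1) and (3).} For (1), $I^B=e^{B\log I}=e^{B\cdot 0}=e^0=I$; $A^0=e^{0\cdot\log A}=e^0=I$; and $A^I=e^{I\log A}=e^{\log A}=A$. For (3), the key observation is that $B\log A$ need not be self-adjoint, so one cannot immediately invoke $\log(e^{B\log A})=B\log A$. Instead I would argue directly from the definition twice: $A^B=e^{B\log A}$ is positive and invertible only when $AB=BA$; in general it is merely invertible, and its logarithm is not obviously $B\log A$. This is exactly the anticipated obstacle — see the last paragraph. Assuming it is resolved, $(A^B)^C=e^{C\log(A^B)}=e^{C(B\log A)}=e^{(CB)\log A}=A^{CB}$.

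\textbf{Items (4) and (2).} For (4), since $A,B,C$ pairwise commute, $B\log A$ and $C\log A$ commute (both are functions of $A$ times commuting operators, and $B,C$ commute), so $e^{B\log A}e^{C\log A}=e^{B\log A+C\log A}=e^{(B+C)\log A}$, i.e. $A^BA^C=A^{B+C}$. For (2), with $A,B,C$ pairwise commuting and $B$ positive invertible: $AB$ is positive and invertible, and by Proposition \ref{log A log AB= log A+log B}, $\log(AB)=\log A+\log B$. Then $(AB)^C=e^{C\log(AB)}=e^{C(\log A+\log B)}=e^{C\log A+C\log B}$; since $C\log A$ and $C\log B$ commute (because $A$, $B$, $C$ pairwise commute, so $\log A$, $\log B$, $C$ pairwise commute), this factors as $e^{C\log A}e^{C\log B}=A^CB^C$.

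\textbf{Main obstacle.} The delicate point is item (3), specifically the identity $\log(A^B)=B\log A$ when $B\log A$ is not self-adjoint, since $\log$ has so far only been defined for positive invertible operators via the functional calculus. I expect the cleanest fix is to avoid computing $\log(A^B)$ altogether and instead prove (3) by a different route: show that both sides equal a common expression. One option is to observe that $(A^B)^C = e^{C\log(e^{B\log A})}$ and use $\log(e^X)=X$, which is valid whenever $e^X$ is positive invertible and $X$ is self-adjoint — but here $X = B\log A$ may fail to be self-adjoint, so $A^B$ need not be positive. An alternative, which I think is what the authors intend, is to restrict implicitly (or prove) that the identity $\log(e^{X})=X$ extends to the relevant operators, or to reduce to the commuting case; in any event, carefully justifying the passage $\log(A^B)=B\log A$ in full generality is the crux, and the routine exponential-arithmetic in (1), (2), (4) is straightforward by comparison.
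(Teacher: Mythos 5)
Your treatments of items (1), (2) and (4) are correct and essentially identical to the paper's own proof: (1) is the same direct computation $I^B=e^{B\log I}=I$, (2) uses Proposition \ref{log A log AB= log A+log B} together with the commutativity of $C\log A$ and $C\log B$ exactly as the paper does, and (4) is the same factorization $e^{(B+C)\log A}=e^{B\log A}e^{C\log A}$ based on the commutativity of $B\log A$ and $C\log A$.

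The one genuine gap is item (3), which you leave conditional on justifying $\log(A^B)=B\log A$; as submitted, your proof of (3) is therefore incomplete. Your suspicion about where the difficulty lies is exactly right, though: the paper's own argument is the one-line computation $(A^B)^C=e^{C\log(A^B)}=e^{CB\log A}=A^{CB}$, and the identity $\log(A^B)=B\log A$ it silently uses is precisely the conclusion of the proposition immediately preceding the theorem, which is proved under the hypothesis $AB=BA$ (then $B\log A=(\log A)B$ is self-adjoint, so $A^B=e^{B\log A}$ is positive and invertible and $\log(A^B)=\log\bigl(e^{B\log A}\bigr)=B\log A$). Some such hypothesis is unavoidable: without commutativity $A^B$ need not be self-adjoint, let alone positive --- the paper's own $2\times 2$ example gives $A^B=\left(\begin{smallmatrix}1&\log 2\\ 0&1\end{smallmatrix}\right)$ --- so $(A^B)^C$ is not even defined by the paper's definition and $\log(A^B)$ has no meaning in the continuous-functional-calculus framework used here. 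The way to close your gap is thus to read (3) with the implicit standing assumption $AB=BA$ (as the paper evidently intends), cite the preceding proposition for $A^B$ positive invertible with $\log(A^B)=B\log A$, and then your displayed computation finishes the proof; in full generality the statement cannot be proved because it is not well-posed.
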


    \begin{proof}\hfill
        \begin{enumerate}
            \item Clearly
            \[I^B=e^{B\log I}=e^{B0}=e^0=I.\]
            The identities  $A^0=I$ and $A^I=A$ are even clearer.
            \item We have
            \[(AB)^C=e^{C\log(AB)}=e^{C(\log A+\log B)}=e^{C\log A+C\log B}.
            \]
            Since $CA=AC$, it ensues that $C\log A=(\log A) C$. Similarly,
            $C\log B=(\log B) C$. Hence $C\log A$ commutes with $C\log B$ as
            $\log A\log B=\log B\log A$. Therefore,
            \[e^{C\log A+C\log B}=e^{C\log A}e^{C\log B}=A^CB^C,\] and so
            \[(AB)^C=A^CB^C,\]
            as needed.
            \item We may write
            \[(A^B)^C=e^{C\log(A^B)}=e^{CB\log A}=A^{CB}.\]
            \item First, recall that $B\log A$ commutes with $C\log A$. Then, we have
            \[A^{B+C}=e^{(B+C)\log A}=e^{B\log A+C\log A}=e^{B\log A}e^{C\log A}=A^BA^C,\]
            as suggested.
        \end{enumerate}
    \end{proof}

    \begin{cor}
        Let $A,B\in B(H)$ be both positive and invertible operators such
        that $AB=BA$. Then $A^TB^T=B^TA^T$ for any $T\in B(H)$ which
        commutes with both $A$ and $B$.
    \end{cor}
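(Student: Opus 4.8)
The plan is to reduce the claimed identity $A^TB^T=B^TA^T$ to the commutation of the two self-adjoint operators $T\log A$ and $T\log B$, and then invoke Theorem \ref{jklopiieozopapazpzppzp} together with Proposition \ref{log A log AB= log A+log B}. First I would record that since $A$ is positive and invertible and $T$ commutes with $A$, the operator $T\log A$ is self-adjoint: indeed $TA=AT$ forces $T\,\log A=(\log A)\,T$ because $\log A$ is a continuous function of $A$, and the product of commuting self-adjoint operators is self-adjoint. The same argument gives that $T\log B$ is self-adjoint. Hence $A^T=e^{T\log A}$ and $B^T=e^{T\log B}$ are genuinely exponentials of self-adjoint operators, and in particular both are positive (this is also part of the Proposition preceding Theorem \ref{jklopiieozopapazpzppzp}).

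Next I would establish that $T\log A$ and $T\log B$ commute with each other. From $AB=BA$ and Proposition \ref{log A log AB= log A+log B} we get $\log A\,\log B=\log B\,\log A$. Combining this with $T\log A=(\log A)T$, $T\log B=(\log B)T$ and $TA=AT$, $TB=BT$ (so $T$ commutes with $\log A$ and with $\log B$), a short computation gives
\[
(T\log A)(T\log B)=T(\log A)T\log B=T T(\log A)(\log B)=T^2(\log B)(\log A)=(T\log B)(T\log A).
\]
Once the two exponents commute, the classical fact that $e^Xe^Y=e^Ye^X$ for commuting $X,Y$ (Theorem 1.1, the "$\Rightarrow$" direction, applied to the self-adjoint operators $X=T\log A$ and $Y=T\log B$) yields
\[
A^TB^T=e^{T\log A}e^{T\log B}=e^{T\log B}e^{T\log A}=B^TA^T,
\]
which is the assertion.

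The only mildly delicate point, and the one I would be careful to spell out, is the passage from $TA=AT$ to $T\,\log A=(\log A)\,T$: this is where the continuous functional calculus enters, via the standard fact that anything commuting with a self-adjoint (here positive invertible) operator commutes with every continuous function of it; everything after that is bookkeeping with commuting self-adjoint operators. I expect no genuine obstacle, since all the needed ingredients — positivity of $A^T$ and $B^T$, the logarithm law $\log(AB)=\log A+\log B$, and the exponential commutation theorem — are already available in the excerpt.
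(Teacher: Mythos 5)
Your route is genuinely different from the paper's: the paper simply cites item (2) of Theorem \ref{jklopiieozopapazpzppzp} and writes $A^TB^T=(AB)^T=(BA)^T=B^TA^T$, whereas you work directly with the exponents, showing $T\log A$ and $T\log B$ commute and then concluding that their exponentials commute. That core computation is correct: $TA=AT$ and $TB=BT$ give $T\log A=(\log A)T$ and $T\log B=(\log B)T$ by the functional calculus, $AB=BA$ gives $\log A\log B=\log B\log A$, and your chain of equalities then shows $(T\log A)(T\log B)=(T\log B)(T\log A)$. Your argument is in fact more self-contained than the paper's, since the cited Theorem \ref{jklopiieozopapazpzppzp}(2) is itself proved by essentially this kind of bookkeeping; the paper's proof buys brevity by reusing the product rule $(AB)^C=A^CB^C$.

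There is, however, one unjustified assertion in your write-up: you claim that $T\log A$ and $T\log B$ are self-adjoint and that $A^T$, $B^T$ are positive, "since the product of commuting self-adjoint operators is self-adjoint." But the corollary does not assume $T$ to be self-adjoint --- $T$ is an arbitrary element of $B(H)$ commuting with $A$ and $B$ --- so these claims have no basis and are false in general. Consequently your appeal to Theorem 1.1 (which is stated for self-adjoint operators) is not licensed as written. Fortunately none of this is needed: once $X=T\log A$ and $Y=T\log B$ commute, the identity $e^Xe^Y=e^Ye^X$ follows for arbitrary bounded operators directly from the power series $e^X=\sum_{n\ge0}X^n/n!$, since $XY=YX$ implies every $X^n$ commutes with every $Y^m$. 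Replacing the invocation of Theorem 1.1 by this elementary fact, and deleting the self-adjointness and positivity claims, your proof is complete and valid under exactly the hypotheses of the corollary.
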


    \begin{proof}Since $A$, $B$ and $T$ all pairwise commute, the
        foregoing theorem yields
        \[A^TB^T=(AB)^T=(BA)^T=B^TA^T,\]
        as needed.
    \end{proof}

    Next, we give results involving the notion of the adjoint.

    \begin{thm}
        Let $A\in B(H)$ be positive and invertible. Let $B\in B(H)$ be such
        that $AB=BA$. Then
        \begin{enumerate}
            \item $(A^B)^*=A^{B^*}$. In particular, when $B$ is self-adjoint, then $A^B$ is also self-adjoint.
            \item If $B$ is normal, then $A^B$ is also normal.
            \item If $B$ is anti-symmetric, i.e. $B^*=-B$, then $A^B$ is unitary.
        \end{enumerate}
    \end{thm}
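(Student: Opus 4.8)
The plan is to use the fact that $A$ is positive and invertible together with the commutation hypothesis $AB=BA$ to conclude that $B$ commutes with $\log A$, hence that $B\log A$ and $B^*\log A$ are well-behaved under adjoints and exponentiation. Since $A=A^*$ is positive and invertible, $\log A$ is self-adjoint, and $AB=BA$ yields $B(\log A)=(\log A)B$ by the functional calculus (as in the proof of Theorem~\ref{jklopiieozopapazpzppzp}). Taking adjoints of $B\log A=(\log A)B$ gives $(\log A)B^*=B^*(\log A)$, so $B^*$ also commutes with $\log A$. These two commutation facts are the workhorse for all three parts.

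For part (1), I would compute $(A^B)^* = (e^{B\log A})^* = e^{(B\log A)^*} = e^{(\log A)B^*} = e^{B^*\log A} = A^{B^*}$, where the middle equality uses that the adjoint of a convergent operator exponential is the exponential of the adjoint (immediate from $\|\cdot\|$-convergence of the defining series and continuity of the adjoint), and the next equality uses that $B^*$ commutes with $\log A$. The ``in particular'' is then the case $B^*=B$. For part (2), given $B$ normal, i.e. $BB^*=B^*B$, I would check that $B\log A$ is normal: its adjoint is $B^*\log A$ by part (1)'s computation, and $(B\log A)(B^*\log A) = B B^* (\log A)^2 = B^* B (\log A)^2 = (B^*\log A)(B\log A)$, using repeatedly that $\log A$ commutes with $B$, with $B^*$, and with itself. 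Then $A^B=e^{B\log A}$ is the exponential of a normal operator, hence normal (the exponential of a normal operator is normal, e.g. via the functional calculus or directly since $e^N$ and $(e^N)^*=e^{N^*}$ commute when $N$ and $N^*$ do). For part (3), if $B^*=-B$ then $N:=B\log A$ satisfies $N^*=B^*\log A=-B\log A=-N$, so $N$ is skew-adjoint; then $(A^B)(A^B)^* = e^N e^{N^*} = e^N e^{-N} = e^{N-N} = I$ because $N$ commutes with $-N$, and similarly $(A^B)^*A^B=I$, so $A^B$ is unitary.

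I do not anticipate a serious obstacle here; the only point requiring a little care is justifying $(e^T)^*=e^{T^*}$ and the additivity $e^{S+T}=e^Se^T$ for commuting $S,T$ in the Banach-algebra setting, but both are standard and follow from absolute convergence of the exponential series (the paper already invokes the latter in Section~1 and in the proof of Theorem~\ref{jklopiieozopapazpzppzp}(4)). I would simply state these as known. The one genuinely substantive step is the propagation of commutativity: from $AB=BA$ to $B(\log A)=(\log A)B$ via the continuous functional calculus, and thence to $B^*(\log A)=(\log A)B^*$ by taking adjoints; everything else is bookkeeping with the exponential series.
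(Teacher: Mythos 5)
Your proof is correct and follows essentially the same route as the paper: both rest on propagating $AB=BA$ to $B\log A=(\log A)B$ and $B^*\log A=(\log A)B^*$, then using $(e^T)^*=e^{T^*}$ and $e^Se^T=e^{S+T}$ for commuting operators. The only cosmetic difference is that for parts (2) and (3) the paper invokes its earlier law $A^BA^C=A^{B+C}$ from Theorem~\ref{jklopiieozopapazpzppzp} with $C=B^*$, whereas you verify directly that $B\log A$ is normal (resp.\ skew-adjoint) and exponentiate.
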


    \begin{proof}Suppose $AB=BA$. Then $(\log A)B=B\log A$ from which we
        derive $(\log A)B^*=B^*\log A$. Then
        \[(A^B)^*=\left(e^{B\log A}\right)^*=e^{(B\log A)^*}=e^{(\log A)B^*}=e^{B^*\log A}=A^{B^*}.\]

        If $B$ is normal, then $BB^*=B^*B$, and since $A$ commutes with
        $B^*$, Theorem \ref{jklopiieozopapazpzppzp} gives
        \[(A^B)^*A^B=A^{B^*}A^B=A^{B^*+B}\]
        and
        \[A^B(A^B)^*=A^BA^{B^*}=A^{B+B^*}.\]
        Therefore, $A^B$ is normal.

        Finally, if $B^*=-B$, then $B$ is normal and so
        \[(A^B)^*A^B=A^B(A^B)^*=A^{B+B^*}.\]
        Because $B^*=-B$, $A^{B+B^*}=A^0=I$. Consequently, $A^B$ is unitary.
    \end{proof}

    The converse of the previous proposition is, in general, untrue. To
    see why, it suffices to consider $I^B$ (indeed, $I^B=I$ for any
    $B\in B(H)$). Imposing a certain control on the norm of $B\log A$,
    however, resolves the issue.

    \begin{pro}Let $A\in B(H)$ be positive and invertible. Let $B\in B(H)$ be
        self-adjoint such that $A^B$ is also self-adjoint. If $\|B\log
        A\|<2\pi$, then $AB=BA$.
    \end{pro}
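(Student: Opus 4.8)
The plan is to reduce the self-adjointness of $A^B=e^{B\log A}$ to a statement about the commutator $[B,\log A]$, and then to use the norm bound to pin down its spectrum. Write $C=\log A$, which is self-adjoint because $A$ is positive and invertible, and put $T=BC=B\log A$. Since $B$ and $C$ are self-adjoint, $(B\log A)^*=(\log A)B=CB=T^*$, so the hypothesis that $A^B=e^T$ be self-adjoint reads
\[e^T=(e^T)^*=e^{T^*}.\]
Call this common self-adjoint, invertible operator $M$.

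First I would record two commutation relations. From the elementary identities $B(CB)^n=(BC)^nB$ and $C(BC)^n=(CB)^nC$, valid for every $n\ge 0$, summation of the exponential series yields $Be^{T^*}=e^{T}B$ and $Ce^{T}=e^{T^*}C$. Because $e^T=e^{T^*}=M$, these say exactly that $B$ and $C=\log A$ each commute with $M$. Hence the skew-adjoint operator $K:=T-T^*=B\log A-(\log A)B=[B,\log A]$ also commutes with $M$; moreover $K^*=-K$, so $K$ is normal with $\sigma(K)\subset i\R$. The entire problem is now equivalent to proving $K=0$: indeed $K=0$ means $B$ commutes with $\log A$, hence with $A=e^{\log A}$, that is $AB=BA$.

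The decisive step is the relation $e^{K}=I$. The guiding computation is $e^{K}=e^{T-T^*}=e^{T}e^{-T^*}=MM^{-1}=I$. Granting this, since $K$ is normal the spectral mapping theorem gives $e^{\sigma(K)}=\{1\}$, whence $\sigma(K)\subset 2\pi i\Z$. Now the norm hypothesis enters: $\|B\log A\|<2\pi$ confines $\sigma(K)$ to the open disc $\{|z|<2\pi\}$, so that
\[\sigma(K)\subset 2\pi i\Z\cap\{\,|z|<2\pi\,\}=\{0\}.\]
As $K$ is normal with $\sigma(K)=\{0\}$, its norm equals its spectral radius $0$, so $K=0$, and therefore $AB=BA$, as claimed.

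The main obstacle is precisely the passage $e^{T}=e^{T^*}\Rightarrow e^{T-T^*}=I$ together with the spectral localization that follows. The factorization $e^{T}e^{-T^*}=e^{T-T^*}$ is automatic only when $T$ and $T^*$ commute, which is not granted; the real work is to deduce $e^{K}=I$ from the commutation of $B$ and $\log A$ with $M$ established above rather than from commutation of $T$ with $T^*$, and then to confirm that the bound $\|B\log A\|<2\pi$ genuinely keeps $\sigma(K)$ inside the disc of radius $2\pi$. This is the single place where the constant $2\pi$ is consumed, and it is where the argument must be made fully rigorous.
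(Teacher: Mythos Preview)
Your proposal is not a complete proof, and it proceeds along a quite different line from the paper. The paper's argument is a one--line citation: a theorem of Kurepa (1962; cf.\ Schmoeger 2002) states that if $T\in B(H)$ satisfies $\|T\|<2\pi$ and $e^{T}$ is self-adjoint, then $T$ itself is self-adjoint. Applying this with $T=B\log A$ yields $B\log A=(\log A)B$, and then $BA=Be^{\log A}=e^{\log A}B=AB$. What you are attempting is essentially a direct proof of Kurepa's theorem in this special setting.

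Two genuine problems prevent your outline from closing. First, the central claim $e^{K}=I$ is never established---you yourself flag this in your last paragraph. The factorization $e^{T}e^{-T^{*}}=e^{T-T^{*}}$ requires $[T,T^{*}]=0$, which is precisely what you are trying to prove; and the commutation of $B$ and $\log A$ with $M=e^{T}$ that you correctly derive does not, by any argument you supply, yield $e^{K}=I$. Without this step the whole spectral argument is vacuous. Second, the spectral bound you invoke is wrong: from $\|T\|<2\pi$ one gets only
\[
\|K\|=\|T-T^{*}\|\le \|T\|+\|T^{*}\|=2\|T\|<4\pi,
\]
and this inequality can be saturated (e.g.\ take the Pauli-type pair $B=\begin{pmatrix}0&1\\1&0\end{pmatrix}$, $C=\log A=\begin{pmatrix}1&0\\0&-1\end{pmatrix}$, for which $T=BC$ is skew-adjoint and $\|K\|=2\|T\|$). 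Hence even if $e^{K}=I$ and $\sigma(K)\subset 2\pi i\Z$ were granted, your disc of radius $2\pi$ is not justified; the correct disc has radius $4\pi$ and does not exclude $\pm 2\pi i$, so $K=0$ would not follow. Both issues show that the constant $2\pi$ is used in a subtler way than your sketch allows; Kurepa's original argument (or Schmoeger's) handles this, and that is exactly what the paper cites.
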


    \begin{proof}Since $\|B\log
        A\|<2\pi$ and $A^B=e^{B\log A}$ is self-adjoint, by
        \cite{Kurepa-LOG-1962} (cf. \cite{Schmoeger-DEM-MATh-2002}), we
        infer that $B\log A$ is self-adjoint. In other words, $B\log A=(\log
        A) B$. Thus,
        \[BA=Be^{\log A}=e^{\log A} B=AB,\]
        as needed.

    \end{proof}

    Let us now examine the validity of the well-known Heinz inequality.
    Recall that this inequality says that if $A,B\in B(H)$ are such that
    $0\leq A\leq B$, then $A^\alpha\leq B^\alpha$ for any
    $\alpha\in[0,1]$. See \cite{FUR.book} for a proof. The inequality
    remains valid for $\alpha>1$ if the condition $AB=BA$ is added.

    \begin{pro}
        Let $A,B\in B(H)$ be both positive such that $A\leq B$ and $A$ is
        invertible. Then
        \[A^T\leq B^T\]
        for any positive $T\in B(H)$ provided $T$, $A$ and $B$ pairwise
        commute.
    \end{pro}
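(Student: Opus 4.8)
The plan is to reduce the whole statement to a pointwise scalar inequality inside a commutative C*-algebra. First I would check that the statement even makes sense: since $A$ is positive and invertible, $\sigma(A)\subset(0,\infty)$ is compact, so $A\geq\varepsilon I$ for some $\varepsilon>0$; together with $A\leq B$ this gives $B\geq\varepsilon I>0$, so $B$ is positive and invertible and $B^T=e^{T\log B}$ is well defined. Moreover $TA=AT$ forces $T$ to commute with $\log A\in C^*(A)$, so $T\log A=(\log A)T$ is self-adjoint and $A^T$ is positive; likewise for $B^T$.

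Next I would exploit the commutativity hypothesis. Since $A$, $B$ and $T$ are self-adjoint and pairwise commute, every monomial in them is self-adjoint and any two such monomials commute, so the $*$-algebra they generate is commutative; let $\mathcal{A}$ be its closure, a commutative C*-subalgebra of $B(H)$. Then $\log A,\log B\in\mathcal{A}$, and since $\mathcal{A}$ is norm-closed, also $A^T=e^{T\log A}\in\mathcal{A}$ and $B^T=e^{T\log B}\in\mathcal{A}$. By the Gelfand representation $\mathcal{A}\cong C(X)$ for some compact Hausdorff $X$. Using spectral permanence and the fact that both the order relation and the continuous functional calculus are unaffected by passing to a C*-subalgebra, under this isomorphism $A,B,T$ correspond to continuous functions $a,b,t$ on $X$ with $0<a\leq b$ and $t\geq0$, while $A^T$ and $B^T$ correspond to the ordinary real powers $a^t$ and $b^t$.

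It then only remains to observe that for real numbers $0<a\leq b$ and $t\geq0$ one has $a^t\leq b^t$, because $s\mapsto s^t$ is nondecreasing on $(0,\infty)$. Applying this at every point of $X$ gives $a^t\leq b^t$ in $C(X)$, i.e. $A^T\leq B^T$ in $\mathcal{A}$, and hence in $B(H)$, which is the assertion.

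I expect the only genuinely delicate point to be the bookkeeping around $\mathcal{A}$: one must make sure that passing to the commutative C*-subalgebra changes neither the spectra (hence the positivity and invertibility of $A$ and $B$), nor the inequality $A\leq B$, nor the values of $\log$ and $\exp$; these are standard consequences of spectral permanence, but they deserve an explicit word. Everything after that is the elementary scalar fact above. Alternatively, one can bypass the Gelfand transform altogether: operator monotonicity of $\log$ (see \cite{FUR.book}) gives $\log A\leq\log B$; conjugating by $T^{1/2}$ and using that $T^{1/2}$ commutes with $\log A$ and $\log B$ yields $T\log A\leq T\log B$; and since $T\log A$ and $T\log B$ commute, the identity $e^{T\log B}-e^{T\log A}=e^{T\log A}\bigl(e^{T\log B-T\log A}-I\bigr)$ exhibits the difference as a product of two commuting positive operators, hence as a positive operator, as wished.
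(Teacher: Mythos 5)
Your main argument is correct, but it takes a genuinely different route from the paper. The paper stays entirely with operator inequalities: it invokes the operator monotonicity of $\log$ (as a consequence of the classical Heinz--L\"{o}wner inequality) to get $\log A\leq\log B$, multiplies by the commuting positive $T$ to get $T\log A\leq T\log B$, and then quotes the fact (Exercise 9, p.~243 in \cite{Con}) that $X\leq Y$ with $XY=YX$ implies $e^X\leq e^Y$. Your main route instead places $A$, $B$, $T$ inside the commutative C*-algebra they generate and uses the Gelfand transform together with spectral permanence to reduce the whole statement to the pointwise scalar inequality $a^t\leq b^t$ for $0<a\leq b$, $t\geq 0$. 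This is sound: positivity, invertibility, the order relation and the continuous functional calculus (hence $\log$ and $\exp$) are all preserved under passage to a unital C*-subalgebra, and the unit is automatically present since $A$ is positive and invertible, so $I\in C^*(A)$. What this buys is an argument that bypasses L\"{o}wner--Heinz entirely; what it costs is that it uses the full strength of pairwise commutativity, so it gives no hint toward the paper's open question of dropping the commutativity of $A$ and $B$. Your alternative sketch at the end is essentially the paper's proof, with the added value that you justify explicitly the two steps the paper handles by citation or silently: the passage from $\log A\leq\log B$ to $T\log A\leq T\log B$ by conjugation with $T^{1/2}$ (which commutes with $\log A$ and $\log B$), and the exponential monotonicity step via the factorization $e^{T\log B}-e^{T\log A}=e^{T\log A}\bigl(e^{T\log B-T\log A}-I\bigr)$, a product of commuting positive operators. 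Your derivation of the invertibility of $B$ from $B\geq A\geq\varepsilon I$ is also a cleaner justification than the paper's reference to an exercise.
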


    \begin{proof}First, the invertibility of $B$, which is essential, is
        guaranteed by the inequality $A\leq B$ (see Exercise 5.3.30 in
        \cite{Mortad-Oper-TH-BOOK-WSPC} for a new proof of this known fact).

        Since $0\leq A\leq B$ and both $A$ and $B$ are invertible, and as a
        consequence of the classical Heinz inequality, we know that $\log
        A\leq \log B$. Since $T$ is positive and commutes with $A$, it
        ensues that $T$ commutes with $\log A$. Similarly, $T$ commutes with
        $\log B$. Therefore
        \[T\log A\leq T\log B.\]
        Since $T\log A$ commutes with $T\log B$, it follows that by say
        Exercise 9 on Page 243 in \cite{Con} that
        \[e^{T\log A}\leq e^{T\log B}\]
        or
        \[A^T\leq B^T,\]
        as wished.
    \end{proof}

    \begin{rema}
        It is unknown to me whether the previous result holds if the
        commutativity of $A$ and $B$ is dropped.
    \end{rema}

    \begin{rema}
        Readers are probably already aware that there are self-adjoint
        matrices $A$ and $B$ such that $A\geq B$ but $e^A\not\geq e^B$ (see
        e.g. \cite{Mortad-counterexamples book OP TH} for a counterexample).
    \end{rema}

    In the end, let us give a couple of examples or so.

    \section{Conclusion}
    This new concept should hopefully stimulate further research as
    regards many already known results which involve classical powers of
    operators.

    One is also tempted to define a similar notion for the class of
    unbounded operators. As it is known, we will have some domain issues
    to fix. Perhaps, this would constitute a follow-up paper. Let me say
    a word or two about possible extensions. Let $A$ be an unbounded
    self-adjoint positive operator which is also boundedly invertible,
    and let $B\in B(H)$ be self-adjoint for the moment. Thanks to the
    functional calculus for unbounded self-adjoint operators (see e.g.
    \cite{SCHMUDG-book-2012}), we may define $\log A$. Now, if we set
    \[A^B=e^{B\log A},\]
    then it is seen that $B\log A$ is not necessarily self-adjoint (it
    is not even normal). Indeed, it is not necessarily a closed
    operator. So there are a few modifications one can carry out to
    obtain a well defined formula. For instance, we could assume that
    $B$ is invertible or set
    \[A^B=e^{\overline{B\log A}}\text{ or }A^B=e^{(\log A)B}\]
    where $\overline{B\log A}$ denotes the closure of the operator
    $B\log A$. This is still non-sufficient to have a well defined
    formula. Again, there are self-adjoint operators $T$ (unbounded) and
    $S\in B(H)$ such that $D(TS)=\{0\}$ (see e.g.
    \cite{Mortad-counterexamples book OP TH} or \cite{Mortad-34page
        paper square roots et al.}). In other words, $(\log A)B$ could only
    be defined on $\{0\}$.

    So, we need to also assume the commutativity condition $BA\subset
    AB$ so that $B\log A\subset (\log A) B$ by the spectral theorem (see
    e.g. Theorem 4.11 on Page 323 in \cite{Con}). This commutativity
    assumption now guarantees the self-adjointness of $(\log A) B$ (or
    its normality if $B$ is only normal). Some very related papers are
    \cite{Gustafson-Mortad-II}, \cite{Jung-Mortad-Stochel},
    \cite{Meziane-Mortad-I}, \cite{MHM1}, \cite{Mortad-OaM-2014}, and
    when dealing with sums of generalized powers we would need
    \cite{Mortad-Normality-Sum} and \cite{Mortad-2015-RCSP}.

    \bibliographystyle{amsplain}

\end{document}